\newtheorem{theorem}{Theorem}
\newtheorem{example}{Example}
\theoremstyle{definition}
\newtheorem*{definition}{Definition}
\newtheorem*{remark}{Remark}
\begin{document}

\title{Sets of iterated Partitions and the\\ Bell iterated Exponential Integers}

\author{Ivar Henning Skau\\               
University of South-Eastern Norway\\    
3800 B{\o}, Telemark\\              
ivar.skau@usn.no
\and
Kai Forsberg Kristensen\\            
University of South-Eastern Norway\\    
3918 Porsgrunn, Telemark\\          
kai.f.kristensen@usn.no}                   

\maketitle 

\begin{abstract}
It is well known that the Bell numbers represent the total number of partitions of an n-set. Similarly, the Stirling numbers of the second kind, represent the number of k-partitions of an n-set. In this paper we introduce a certain partitioning process that gives rise to a sequence of sets of "nested" partitions. We prove that at stage m, the cardinality of the resulting set will equal the m-th order Bell number. This set-theoretic interpretation enables us to make a natural definition of higher order Stirling numbers and to study the combinatorics of these entities. The cardinality of the elements of the constructed "hyper partition" sets are explored.
\end{abstract} 

\section{A partitioning process.}

Consider the 3-set $S=\{a,b,c\}$. The partition set $\wp_3^{(1)}$ of $S$, where the elements of $S$ are put into boxes, contains the five partitions shown in the second column of Figure \ref{fig:partitions}.\\

\begin{figure}[htbp]
	\centering
	\setlength{\unitlength}{1cm}
\begin{picture}(15,8)
\put(0.4,8){$S=\wp_3^{(0)}$}
\put(2.8,8){$\wp_3^{(1)}$}
\put(6.7,8){$\wp_3^{(2)}$}
\put(2.2,7.0){$\{$}
\put(2.5,6.9){\framebox(0.8,0.45){$abc$}$\:,$}
\put(2.5,6.0){\framebox(0.6,0.45){$ab$}~\framebox(0.4,0.45){$c$}$\:,$}
\put(2.5,5.1){\framebox(0.6,0.45){$ac$}~\framebox(0.4,0.45){$b$}$\:,$}
\put(2.5,4.2){\framebox(0.6,0.45){$bc$}~\framebox(0.4,0.45){$a$}$\:,$}
\put(2.5,3.3){\framebox(0.4,0.45){$a$}~\framebox(0.4,0.45){$b$}~\framebox(0.4,0.45){$c$}}
\put(4.1,3.4){$\}$}
\put(0.3,7){$\{a,b,c\}$}

\put(4.8,7.05){$\{$}
\put(5.1,6.85){\framebox(1,0.6){\framebox(0.8,0.45){$abc$}}$\:,$}
\put(5.1,5.95){\framebox(1.3,0.6){\framebox(0.6,0.45){$ab$}~\framebox(0.4,0.45){$c$}}$\:,$~\framebox(0.78,0.6){\framebox(0.6,0.45){$ab$}}~\framebox(0.58,0.6){\framebox(0.4,0.45){$c$}}$\:,$}
\put(5.1,5.05){\framebox(1.3,0.6){\framebox(0.6,0.45){$ac$}~\framebox(0.4,0.45){$b$}}$\:,$~\framebox(0.78,0.6){\framebox(0.6,0.45){$ac$}}~\framebox(0.58,0.6){\framebox(0.4,0.45){$b$}}$\:,$}
\put(5.1,4.15){\framebox(1.3,0.6){\framebox(0.6,0.45){$bc$}~\framebox(0.4,0.45){$a$}}$\:,$~\framebox(0.78,0.6){\framebox(0.6,0.45){$bc$}}~\framebox(0.58,0.6){\framebox(0.4,0.45){$a$}}$\:,$}
\put(5.1,3.25){\framebox(1.65,0.6){\framebox(0.4,0.45){$a$}~\framebox(0.4,0.45){$b$}~\framebox(0.4,0.45){$c$}}$\:,$~\framebox(1.1,0.6){\framebox(0.4,0.45){$a$}~\framebox(0.4,0.45){$b$}}~\framebox(0.58,0.6){\framebox(0.4,0.45){$c$}}$\:,$}
\put(5.1,2.35){\framebox(1.1,0.6){\framebox(0.4,0.45){$a$}~\framebox(0.4,0.45){$c$}}~\framebox(0.58,0.6){\framebox(0.4,0.45){$b$}}$\:,$~\framebox(1.1,0.6){\framebox(0.4,0.45){$b$}~\framebox(0.4,0.45){$c$}}~\framebox(0.58,0.6){\framebox(0.4,0.45){$a$}}$\:,$~\framebox(0.58,0.6){\framebox(0.4,0.45){$a$}}~\framebox(0.58,0.6){\framebox(0.4,0.45){$b$}}~\framebox(0.58,0.6){\framebox(0.4,0.45){$c$}}}
\put(11.35,2.5){$\}$}

\put(1.95,8.5){\line(0,-1){6.3}}
\put(4.5,8.5){\line(0,-1){6.3}}
\put(0,7.7){\line(1,0){12}}

\end{picture}		
\vspace{-2.4cm}
\caption{The basic set $S$ together with the partition sets $\wp_3^{(1)}$ and $\wp_3^{(2)}$}
	\label{fig:partitions}
\end{figure}  

\vspace{0.5cm}

\noindent Now we proceed, putting boxes into boxes. This means that we create a second order partition set to each first order partition in $\wp_3^{(1)}$. The union of all the second order partition sets is denoted by $\wp_3^{(2)}$, and appears in the third column of Figure \ref{fig:partitions}.\\

\begin{definition}
$\wp_n^{(1)}$ is the set of all partitions of a given $n$-set. For $m>1$, $\wp_n^{(m)}$, called the $m$-th order partition set, is the union of the complete collection of sets, each being the partition set of an element in $\wp_n^{(m-1)}$.
\end{definition}

\noindent We observe that the number of partitions in $\wp_3^{(2)}$ is $|\wp_3^{(2)}|=12$, which also, and not by coincidence, turns out to be the second order Bell number $B_3^{(2)}$.

\section{Connecting higher order Bell numbers and hyper partitions.}

The $m$-th order Bell numbers $B_n^{(m)}\quad (n=0,1,\ldots)$, studied by E. T. Bell in \cite{B38}, are given by the exponential generating functions

$$E_m(x)=\sum\limits_{n=0}^\infty B_n^{(m)}\frac{x^n}{n!}\quad (m\geq 1),$$

\noindent where $E_1(x)=\exp(\exp(x)-1)$ and $E_{m+1}(x)=\exp(E_m(x)-1)$.\\

\noindent In Table \ref{tab:bell} $B_n^{(m)}$ is computed for a few values of $m$ and $n$.\\

\begin{table}[htbp]
\begin{center}
\begin{tabular}{c||c|c|c|c|c|c|c|c}
$m\backslash n$& 1&2 &3&4&5&6&7&8\\
\hline
\hline
1&1& 2& 5& 15& 52& 203& 877& 4140\\
2&1& 3& 12& 60& 358& 2471& 19302& 167894\\
3&1& 4& 22& 154& 1304& 12915& 146115& 1855570\\
4&1& 5& 35& 315& 3455& 44590& 660665& 11035095\\
5&1& 6& 51& 561& 7556& 120196& 2201856& 45592666
\end{tabular}
\end{center}
\caption{Higher order Bell numbers $B_n^{(m)}$ when $1\leq n\leq 8$ and $1\leq m\leq 5$}
\label{tab:bell}
\end{table}

\begin{theorem}
The number of $m$-th order partitions of an $n$-set is $B_n^{(m)}$ ($m,n\geq 1$), i.e.
\begin{equation}
|\wp_n^{(m)}|=B_n^{(m)}.	
\label{eq:mainresult}
\end{equation}
\label{theorem:mainresult}
\end{theorem}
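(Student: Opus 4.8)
The plan is to refine the count by the number of outermost blocks, producing a family of numbers that will serve as the higher-order Stirling numbers promised in the abstract, and then to show that iterating the linear operation attached to the ordinary Stirling matrix, read on exponential generating functions, reproduces exactly the tower $E_1,E_2,\ldots$.

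First I would introduce, for $m,n\ge 1$ and $k\ge 0$, the number $T^{(m)}_{n,k}$ of elements of $\wp_n^{(m)}$ having exactly $k$ outermost ($m$-th level) blocks, so that $|\wp_n^{(m)}|=\sum_k T^{(m)}_{n,k}$. By the Definition, $\wp_n^{(m)}$ is the union over $P\in\wp_n^{(m-1)}$ of the ordinary partition sets of the $P$'s; this union is disjoint, since from any partition $Q$ of $P$ one recovers $P=\bigcup Q$, and the number of partitions of $P$ into $k$ classes depends only on the number $j$ of elements (blocks) of $P$, being the Stirling number $S(j,k)$. Sorting the $P\in\wp_n^{(m-1)}$ by their block count $j$ gives
\[
T^{(m)}_{n,k}=\sum_j T^{(m-1)}_{n,j}\,S(j,k), \qquad T^{(1)}_{n,k}=S(n,k).
\]
In matrix language this reads $T^{(m)}=S^{\,m}$, where $S=(S(n,k))$ is the lower-triangular Stirling matrix, and hence $|\wp_n^{(m)}|=\sum_k(S^{\,m})_{n,k}=(S^{\,m}\mathbf 1)_n$, the $n$-th row sum of $S^{\,m}$.

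Next I would pass to exponential generating functions. The identity $\sum_n S(n,k)\,x^n/n!=(e^x-1)^k/k!$ shows that applying $S$ to a sequence $a=(a_k)$ corresponds, at the level of EGFs, to the substitution $\Phi\colon F(x)\mapsto F(e^x-1)$: if $\hat a(y)=\sum_k a_k\,y^k/k!$, then $Sa$ has EGF $\hat a(e^x-1)$. Since $\mathbf 1$ has EGF $e^x$, the row-sum sequence $S^{\,m}\mathbf 1$ therefore has EGF $G_m:=\Phi^{m}(e^x)$. It remains to identify $G_m$ with $E_m$.

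To do this I would set $E_0(x):=e^x$ (consistent with $E_1=\exp(E_0-1)$) and prove the functional identity $E_j(e^x-1)=E_{j+1}(x)$ for all $j\ge 0$ by induction. The base $j=0$ reads $e^{\,e^x-1}=E_1(x)$, true by definition of $E_1$; for $j\ge 1$, using $E_j=\exp(E_{j-1}-1)$ and the hypothesis $E_{j-1}(e^x-1)=E_j(x)$ one gets $E_j(e^x-1)=\exp(E_{j-1}(e^x-1)-1)=\exp(E_j(x)-1)=E_{j+1}(x)$. Thus $\Phi(E_j)=E_{j+1}$, whence $G_m=\Phi^m(E_0)=E_m$, and comparing coefficients of $x^n/n!$ yields $|\wp_n^{(m)}|=(S^{\,m}\mathbf 1)_n=B_n^{(m)}$. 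The main obstacle is exactly this last compatibility: one must verify that the purely combinatorial ``add a block-layer on top'' recursion $T^{(m)}=S^{\,m}$ is transported by $\Phi$ onto Bell's analytic recursion $E_{m+1}=\exp(E_m-1)$. Once the substitution $x\mapsto e^x-1$ is recognized as the EGF-incarnation of the Stirling matrix, the two recursions lock together and the induction closes.
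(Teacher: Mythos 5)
Your proof is correct, but it reaches the result by a genuinely different decomposition than the paper's. The paper peels off the \emph{ground} level: it sorts the elements of $\wp_n^{(m+1)}$ by their underlying first-order partition, so that a ground-level partition into $k$ blocks carries $|\wp_k^{(m)}|$ higher structures, giving the unrefined recurrence $|\wp_n^{(m+1)}|=\sum_{k=1}^n|\wp_k^{(m)}|S(n,k)$, i.e. \eqref{eq:hyperpartition}, in which the order drops by one while the set size drops from $n$ to $k$; no bookkeeping of block counts is needed, and the Bell recurrence \eqref{eq:bellrelation1} falls out for free. You instead peel off the \emph{top} level, which forces the refinement $T^{(m)}_{n,k}$ by the number of outermost blocks and yields $T^{(m)}=T^{(m-1)}S$ with $n$ held fixed — multiplication by the Stirling matrix on the right, whereas the paper's recurrences \eqref{eq:hyperpartition} and \eqref{eq:higher_stirling} amount to multiplication on the left. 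Your $T^{(m)}_{n,k}$ are exactly the higher-order Stirling numbers $S^{(m)}(n,k)$ that the paper only introduces in Section 3, and your identity $T^{(m)}=\mathbf{S}^m$ is the matrix-power remark preceding Theorem \ref{theorem:gen_stirling}; in effect you prove Theorem \ref{theorem:mainresult} through the Section 3 machinery, close to the ``second proof'' the paper records after Theorem \ref{theorem:gen_stirling}, but reached by the dual (top-down rather than bottom-up) decomposition, so you get Theorem \ref{theorem:mainresult} and the substance of Theorem \ref{theorem:gen_stirling} in one package. The analytic endgame is common to both arguments: the Stirling matrix acts on exponential generating functions as the substitution $F\mapsto F(e^x-1)$. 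Your added value there is making explicit the functional identity $E_j(e^x-1)=E_{j+1}(x)$, which reconciles inside-composition (what the combinatorial recursion produces) with outside-composition (Bell's definition $E_{m+1}=\exp(E_m-1)$); this is precisely the point that the paper's ``straightforward induction argument yields $P^{(m)}(x)=E_m(x)-1$'' passes over in silence, so spelling it out is a small but real gain in rigor.
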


\begin{proof}
The proof makes use of generating functions. Recall that there are altogether $S(n,k)$ (Stirling number of the second kind) distinct $k$-partitions of the given $n$-set, i.e. there are $S(n,k)$ elements in $\wp_n^{(1)}$ which are $k$-sets. Each of these $k$-sets gives rise to $|\wp_k^{(m)}|$ distinct partitions of order $m+1$ of the $n$-set we started with, i.e. elements in $\wp_n^{(m+1)}$. Furthermore, different elements in $\wp_n^{(1)}$ of course give different elements in $\wp_n^{(m+1)}$, since they are already different at the "ground level". This means that we have the recurrence formula

\begin{equation}
|\wp_n^{(m+1)}|=\sum_{k=1}^n|\wp_k^{(m)}|S(n,k),\quad |\wp_k^{(0)}|=1.
	\label{eq:hyperpartition}
\end{equation}

\noindent Now, let $P^{(m)}(x)=\sum_{n=1}^\infty|\wp_n^{(m)}|\cdot x^n/n!$ denote the exponential generating function of $\{|\wp_{n}^{(m)}|\}_{n=1}^\infty$. Multiplication with $x^n/n!$, summation over $n$ and changing the order of summation in \eqref{eq:hyperpartition}, leads to

\begin{equation}
P^{(m+1)}(x)=\sum_{k=1}^\infty|\wp_k^{(m)}|\sum\limits_{n=1}^\infty S(n,k)\cdot \frac{x^n}{n!}.
	\label{eq:genfunpart1}
\end{equation}

\noindent It is well known that $\sum_{n=1}^\infty S(n,k)\cdot x^n/n!=(e^x-1)^k/k!$ is the exponential generating function of the Stirling numbers of the second kind. (A proof of this fact is included in Example \ref{ex:egf_stirling}.) From \eqref{eq:genfunpart1} we therefore get the recurrence formula

\begin{equation}
P^{(m+1)}(x)=\sum_{k=1}^\infty|\wp_k^{(m)}|\cdot \frac{(e^x-1)^k}{k!}=P^{(m)}(e^x-1).
	\label{eq:genfunpart2} 
\end{equation}

\noindent Now, since $P^{(0)}(x)=e^x-1$, a straightforward induction argument yields $P^{(m)}(x)=E_m(x)-1$, which proves \eqref{eq:mainresult} as well as the relation 
\begin{equation}
B_n^{(m+1)}=\sum_{k=1}^nB_k^{(m)}S(n,k),\quad B_k^{(0)}=1, 
	\label{eq:bellrelation1}
\end{equation}

\noindent which now follows from \eqref{eq:hyperpartition}.
\end{proof}

\begin{example}\label{ex:bell1}
\normalfont To demonstrate the power of hyper partition thinking, we give a combinatorial proof of the relation
\begin{equation}
	B_n^{(m)}=\sum\limits_{s=0}^{n-1}\binom{n-1}{s}B_s^{(m)}B_{n-s}^{(m-1)},\quad B_0^{(m)}=1,
	\label{eq:bellrelation2}
\end{equation}
\noindent that appeared in \cite[p. 545, (2.11)]{B38}.\\

\noindent Each element (i.e. a hyper partition) in $\wp_n^{(m)}$ consists of nested sets where the sets at the ground level are subsets of the basic $n$-set $S$. For a partition $p\in\wp_n^{(m)}$ we call elements of $S$ \textit{related} if they "reside" in the same outer set (box) in $p$. Now, fix an arbitrary $a\in S$. We count the number of partitions in $\wp_n^{(m)}$ according to which elements $a$ is related: Let $A$ be a basic $(n-s)$-set, including $a$, of related elements. Now for each of these $\binom{n-1}{n-s-1}=\binom{n-1}{s}$ sets there are $B_{n-s}^{(m-1)}$ inner structures. For the complementary $s$-set $C=S\setminus A$, the partitioning process will generate $B_s^{(m)}$ partitions of order $m$. By combining the possibilities, \eqref{eq:bellrelation2} follows.\\

Observe that if we in the same manner as above fix two (or more) elements in $S$, new formulas emerge.\\

By putting $B_n^{(0)}=1$, we note that \eqref{eq:bellrelation2} is a generalized version of the well known formula (see \cite[p. 210]{C74})

$$B_n=\sum\limits_{s=0}^{n-1}\binom{n-1}{s}B_s.$$ 
 
\end{example}

\section{Higher order Stirling numbers.}

Having established the relationship between partitions of order $m$ and higher order Bell numbers, defining higher order Stirling numbers seems like a natural thing to do.

\begin{definition}
The $m$-th order Stirling number $S^{(m)}(n,k)$ (of the second kind) is the number of $k$-sets in $\wp_n^{(m)}$.
\label{def:gen_stirling}
\end{definition} 

In \cite{B39} E. T. Bell gave an analytical definition of what he called \textit{generalized Stirling numbers} $\zeta_n^{(k,m)}$ by means of generating functions. In Theorem \ref{theorem:gen_stirling} we prove that $S^{(m)}(n,k)=\zeta_n^{(k,m)}$.\\

We note that the higher order Stirling numbers $S^{(m)}(n,k)$ are the entries of the matrix $\textbf{S}^m$, where $\textbf{S}=(S(n,k))$. This can be seen by induction from the relation \eqref{eq:higher_stirling} in the proof of Theorem \ref{theorem:gen_stirling}. Table \ref{tab:higher_stirling} is computed with the aid of such matrices.

\begin{table}[htbp]
\begin{center}
\begin{tabular}{c|c|c|c|c|c|c}
$m$& $S^{(m)}(5,1)$&$S^{(m)}(5,2)$&$S^{(m)}(5,3)$&$S^{(m)}(5,4)$&$S^{(m)}(5,5)$&$B_5^{(m)}$\\
\hline
\hline
5&3455&3325&725&50&1&7556\\
20&1115320&233050&11900&200&1&1360471\\
50&45533300&3706375&74750&500&1&49314926
\end{tabular}
\end{center}
\caption{Some examples of higher order Stirling and Bell numbers}
\label{tab:higher_stirling}
\end{table}

\begin{theorem}
Let $S^{(m)}(n,k)$ be the $m$-th order Stirling numbers of the second kind. Then we have 
$$S^{(m)}(n,k)=\zeta_n^{(k,m)},$$
where $\zeta_n^{(k,m)}$ are the generalized Stirling numbers of the second kind, defined by E.T. Bell in \cite[p. 91]{B39} by the generating functions
$$\frac{(E_{m-1}(t)-1)^k}{k!}=\sum\limits_n \zeta_n^{(k,m)}\cdot\frac{t^n}{n!}.$$
\label{theorem:gen_stirling}
\end{theorem}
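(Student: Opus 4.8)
The plan is to first set up a recurrence for the higher order Stirling numbers that mirrors the recurrence \eqref{eq:hyperpartition} for the hyper partition counts, and then to convert it into a generating-function identity matching Bell's definition of $\zeta_n^{(k,m)}$.

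First I would establish the combinatorial recurrence. By definition an element of $\wp_n^{(m)}$ is a partition of some element $p\in\wp_n^{(m-1)}$. If $p$ happens to be a $j$-set (i.e.\ has $j$ top-level boxes), then partitioning its $j$ boxes into exactly $k$ groups can be done in $S(j,k)$ ways, and each such partition is a $k$-set in $\wp_n^{(m)}$; conversely every $k$-set in $\wp_n^{(m)}$ arises this way. Exactly as in the proof of Theorem \ref{theorem:mainresult}, distinct pairs (an element $p$, a partition of the boxes of $p$) yield distinct elements of $\wp_n^{(m)}$, since they already differ at the ground level, so there is no overcounting. Counting the $j$-sets of $\wp_n^{(m-1)}$ as $S^{(m-1)}(n,j)$ gives
\begin{equation}
S^{(m)}(n,k)=\sum_{j=k}^{n}S^{(m-1)}(n,j)\,S(j,k),\qquad S^{(1)}(n,k)=S(n,k).
\label{eq:higher_stirling}
\end{equation}
In matrix form this reads $\mathbf{S}^m=\mathbf{S}^{m-1}\mathbf{S}$, which justifies the remark that $S^{(m)}(n,k)=(\mathbf{S}^m)_{n,k}$.

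Next I would pass to exponential generating functions. Writing $F_k^{(m)}(t)=\sum_n S^{(m)}(n,k)\,t^n/n!$, equation \eqref{eq:higher_stirling} becomes, after interchanging the order of summation, $F_k^{(m)}(t)=\sum_j S(j,k)\,F_j^{(m-1)}(t)$. I claim $F_k^{(m)}(t)=(E_{m-1}(t)-1)^k/k!$, and I would prove this by induction on $m$, adopting the convention $E_0(t)=e^t$ (consistent with $P^{(0)}(t)=e^t-1$ from the previous proof, so that $P^{(j)}(t)=E_j(t)-1$ holds for all $j\geq 0$). The base case $m=1$ is precisely the known EGF $\sum_n S(n,k)\,t^n/n!=(e^t-1)^k/k!$. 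For the inductive step, substitute the hypothesis $F_j^{(m-1)}(t)=\bigl(E_{m-2}(t)-1\bigr)^j/j!$ and apply the Stirling EGF in the variable $u=E_{m-2}(t)-1$ to obtain
\[
F_k^{(m)}(t)=\sum_j S(j,k)\,\frac{u^j}{j!}=\frac{(e^{u}-1)^k}{k!}=\frac{\bigl(e^{E_{m-2}(t)-1}-1\bigr)^k}{k!}=\frac{(E_{m-1}(t)-1)^k}{k!},
\]
where the last equality is the defining recurrence $E_{m-1}=\exp(E_{m-2}-1)$. Matching Taylor coefficients with Bell's generating function for $\zeta_n^{(k,m)}$ then yields $S^{(m)}(n,k)=\zeta_n^{(k,m)}$.

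The main obstacle I anticipate is the careful justification of \eqref{eq:higher_stirling}: one must verify that the assignment sending a $j$-set $p\in\wp_n^{(m-1)}$ together with a $k$-block partition of its boxes to the corresponding element of $\wp_n^{(m)}$ is a bijection onto the $k$-sets of $\wp_n^{(m)}$, with no collisions between the images coming from different $p$. Once this is secured, the generating-function manipulation is routine; the only bookkeeping points are the index shift between level $m$ and level $m-1$ and the convention $E_0(t)=e^t$ that makes the formula valid at the base level $m=1$.
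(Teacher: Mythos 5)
Your proof is correct, but it decomposes the count at the opposite end of the iteration from the paper, so the two arguments are genuinely different. The paper's recurrence \eqref{eq:higher_stirling} peels off the \emph{ground level}: it conditions on the first-order partition (an $i$-set in $\wp_n^{(1)}$, counted by $S(n,i)$), and the remaining $m$ rounds of partitioning contribute $S^{(m)}(i,k)$; in matrix form $\textbf{S}^{m+1}=\textbf{S}\,\textbf{S}^{m}$. Yours peels off the \emph{top level}: you condition on the $(m-1)$-th order partition $p$ (counted by $S^{(m-1)}(n,j)$ when $p$ has $j$ outer boxes) and perform the final partitioning step in $S(j,k)$ ways; in matrix form $\textbf{S}^{m}=\textbf{S}^{m-1}\textbf{S}$, which is the case $r=m-1$ of the identity $S^{(m)}(n,k)=\sum_i S^{(m-r)}(i,k)S^{(r)}(n,i)$ that the paper records in an example after the theorem, whereas its own proof of Theorem \ref{theorem:gen_stirling} uses the case $r=1$. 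The generating-function step differs accordingly: the paper obtains the functional equation $F_k^{(m+1)}(x)=F_k^{(m)}(e^x-1)$ and iterates the substitution $x\mapsto e^x-1$ on the inside, while you obtain the linear relation $F_k^{(m)}(t)=\sum_j S(j,k)F_j^{(m-1)}(t)$ and resolve it by applying the Stirling EGF identity at $u=E_{m-2}(t)-1$, building the tower of exponentials from the outside in --- the same direction as the paper's Example \ref{ex:egf_stirling}. Both routes rest on the same ingredients (a Stirling-matrix product recurrence, the identity $\sum_n S(n,k)x^n/n!=(e^x-1)^k/k!$, and induction on $m$) and are of comparable length; yours has the advantage that the recurrence is immediate from the definition of $\wp_n^{(m)}$ as the union of the partition sets of the elements of $\wp_n^{(m-1)}$, while the paper's has the advantage of running exactly parallel to its proof of Theorem \ref{theorem:mainresult} via \eqref{eq:hyperpartition}.

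One small repair is needed in your injectivity argument. You justify that distinct pairs $(p,\pi)$ give distinct $k$-sets in $\wp_n^{(m)}$ ``since they already differ at the ground level,'' importing the phrase from the proof of Theorem \ref{theorem:mainresult}. That reason fits the paper's ground-level decomposition, not yours: two distinct elements of $\wp_n^{(m-1)}$ can have the same underlying ground-level partition (in Figure \ref{fig:partitions}, the two elements of $\wp_3^{(2)}$ built over the blocks $ab$ and $c$ differ only in their outer boxes, not at the ground level). The correct --- and in fact easier --- observation for your decomposition is that the parent is recoverable: if $\pi\in\wp_n^{(m)}$ is a partition of $p\in\wp_n^{(m-1)}$, then $p$ is exactly the union of the top-level blocks of $\pi$, so $\pi$ determines $p$ and no collisions can occur.
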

	
\begin{proof}
Let $F_k^{(m)}$ denote the generating function of $\{S^{(m)}(n,k)\}_{n=k}^\infty$. Then we have
$$F_k^{(m)}(x)\equiv \sum\limits_{n=k}^\infty S^{(m)}(n,k)\frac{x^n}{n!}.$$ 
The idea is to come up with an analogous formula to \eqref{eq:hyperpartition}, in order to obtain an analogous formula to \eqref{eq:genfunpart2}. We claim that 
	\begin{equation}
	S^{(m+1)}(n,k)=\sum\limits_{i=k}^n S^{(m)}(i,k)S(n,i).
		\label{eq:higher_stirling}
	\end{equation}
	
\noindent This is true because we know that each of the $S(n,i)$ first order partitions will generate $S^{(m)}(i,k)$ $k$-partitions of order $m+1$. Multiplication by $x^n/n!$ followed by summation over $n$ in \eqref{eq:higher_stirling} gives
	$$F_k^{(m+1)}(x)=\sum\limits_{n,i} S^{(m)}(i,k)S(n,i)\frac{x^n}{n!}=\sum\limits_i S^{(m)}(i,k)\frac{(e^x-1)^i}{i!}=F_k^{(m)}(e^x-1),$$
because $\sum_n S(n,i)x^n/n!=(e^x-1)^i/i!$. We have thus deduced the recurrence formula 
	$$F_k^{(m+1)}(x)=F_k^{(m)}(e^x-1).$$ 
	
\noindent	Since $F_k^{(1)}(x)=(e^x-1)^k/k!$, induction yields 
	
$$F_k^{(m)}(x)=\frac{(E_{m-1}(x)-1)^k}{k!},\quad (E_0(x)=e^x),$$

\noindent which completes the proof.
\end{proof}

\noindent We notice that Theorem \ref{theorem:mainresult} is proved once more since we have $|\wp_n^{(m)}|=\sum_{k=1}^nS^{(m)}(n,k)$ and

$$P^{(m)}(x)=\sum\limits_{n=1}^\infty |\wp_n^{(m)}|\frac{x^n}{n!}=\sum\limits_{k=1}^\infty F_k^{(m)}(x)=E_m(x)-1.$$ 

\noindent How the introduction of the higher order Stirling numbers opens up the scope for hyper partition thinking, is illustrated in the next examples.

\begin{example}
\normalfont If we, in our construction process of $\wp_n^{(m)}$, stop at the $r$-th intermediate stage, i.e. in $\wp_n^{(r)}$, making up status so far by grouping the elements according to their cardinality before advancing further on, we get the following generalized version of \eqref{eq:higher_stirling}:
$$S^{(m)}(n,k)=\sum\limits_{i=k}^nS^{(m-r)}(i,k)S^{(r)}(n,i),$$
since there are $S^{(r)}(n,i)$, $i$-sets in $\wp_n^{(r)}$. This also follows from the matrix representation $\textbf{S}^m=\textbf{S}^{m-r}\textbf{S}^r$, as well as from \eqref{eq:higher_stirling} by induction.\\

\noindent Summing from $k=1$ to $n$ yields
	$$B_n^{(m)}=\sum\limits_{i=1}^nB_i^{(m-r)}S^{(r)}(n,i),$$
which generalizes \eqref{eq:bellrelation1}.
\end{example}

\begin{example}
\normalfont The formula 
\begin{equation}
S^{(m)}(n,k)=\sum\limits_{s=k-1}^{n-1}\binom{n-1}{s}B_{n-s}^{(m-1)}S^{(m)}(s,k-1)
\label{eq:stirlingrelation}	
\end{equation}
may be proved combinatorially in just the same manner as \eqref{eq:bellrelation2} in Example \ref{ex:bell1}. Note that \eqref{eq:stirlingrelation} yields \eqref{eq:bellrelation2} by summation over $k$.
\end{example}

\begin{example}
\normalfont Counting the $k$-sets in $\wp_n^{(m)}$ by first forming the $k$ "families" (outer sets) of related elements, we get
\begin{equation}
S^{(m)}(n,k)=\frac{1}{k!}\sum\limits_{\scriptsize i_1+\cdots+i_k=n}\binom{n}{i_1,\ldots,i_k}B_{i_1}^{(m-1)}\cdots B_{i_k}^{(m-1)},	
\label{eq:outside_in_1}
\end{equation}
where in this case $B_k^{(0)}=1,\; k\geq 1$ and $B_0^{(m)}=0$, because a family with $i$ relatives yields $B_i^{(m-1)}$ elements in $\wp_i^{(m-1)}$, i.e. there are exactly $B_i^{(m-1)}$ possible "inner" structures for an "$i$-family".\\
Summing over $k$ yields 
\begin{equation}
B_n^{(m)}=\sum\limits_{k=1}^n\frac{1}{k!}\sum\limits_{\scriptsize i_1+\cdots+i_k=n}\binom{n}{i_1,\ldots,i_k}B_{i_1}^{(m-1)}\cdots B_{i_k}^{(m-1)}.
\label{eq:outside_in_2}	
\end{equation}

\noindent Note that we have used nothing but the set-theoretic hyper partition interpretation/definition of $B_n^{(m)}$ and $S^{(m)}(n,k)$ in establishing \eqref{eq:outside_in_1} and \eqref{eq:outside_in_2}. Now, therefore, let $f_{m-1}(t)=f_{m-1}$ be the exponential generating function (e.g.f.) to $\{B_n^{(m-1)}\}_{n=0}^\infty$. And observe then that $f_{m-1}^k$ is the e.g.f. of the sequence

$$\left\{\sum\limits_{\scriptsize i_1+\cdots+i_k=n}\binom{n}{i_1,\ldots,i_k}B_{i_1}^{(m-1)}\cdots B_{i_k}^{(m-1)}\right\}_{n=0}^\infty.$$

\noindent Now we multiply \eqref{eq:outside_in_1} and \eqref{eq:outside_in_2} with $x^n/n!$, sum over $n$ and change the order of summation, to obtain
$$F_k^{(m)}(x)=\frac{1}{k!}f_{m-1}^k(x)\quad\mbox{and}\quad f_m(x)=\sum\limits_{k=1}^\infty\frac{f_{m-1}^k(x)}{k!}=\exp(f_{m-1}(x))-1.$$
And since $f_0(x)=e^x-1$, we have $f_m(x)=E_m(x)-1$. So by this reasoning "from outside in", we have an alternative proof of Theorem \ref{theorem:mainresult} as well as of Theorem \ref{theorem:gen_stirling}.
\label{ex:egf_stirling}
\end{example}

\section{An asymptotic consideration}

In \cite[p. 545]{B38} E.T. Bell proved the interesting formula

\begin{equation}
B_n^{(m)}=c_{n-1}m^{n-1}+c_{n-2}m^{n-2}+\cdots+c_0,	
\label{eq:bell_polynomial}
\end{equation}

\noindent where $c_{n-1},\ldots,c_0$ are rational numbers, independent of $m$. When $n$ is fixed, this implies that 

\begin{equation}
\lim\limits_{m\rightarrow\infty}\frac{B_n^{(m)}}{B_n^{(m-1)}}=1,	
\label{eq:bell_m_asymptotic}
\end{equation}

\noindent enabling us to say something more about the cardinality of the members of $\wp_n^{(m)}$.

\begin{remark}
Via a slightly different proof of \eqref{eq:bell_polynomial} than in \cite{B38} one might show that $c_{n-1}=n!/2^{n-1}$, see \cite{S19}.
\end{remark}

\noindent When looking at the "children" of a $p\in \wp_n^{(m)}$, i.e. the elements in $\wp_n^{(m+1)}$ that $p$ gives rise to, we see that all but one of them have lower cardinality than their "father" $p$. Following the next generations in the partitioning process, it appears that the great majority of the descendants are 1-element sets (see Table \ref{tab:higher_stirling}). It is therefore easy to conjecture that the average value $A_n^{(m)}$ of the cardinality of the sets in $\wp_n^{(m)}$ approaches 1 as $m\rightarrow\infty$, i.e. 

\begin{equation}
A_n^{(m)} =\frac{1}{B_n^{(m)}}\sum\limits_{k=1}^nkS^{(m)}(n,k)\underset{m\rightarrow\infty}{\longrightarrow} 1.	
\label{eq:average_cardinality}
\end{equation}

\noindent Let us see why \eqref{eq:average_cardinality} is true. \eqref{eq:bell_m_asymptotic} yields, in conjunction with the observation $S^{(m)}(n,1)=B_n^{(m-1)}$, that $S^{(m)}(n,1)\sim B_n^{(m)}\quad (m\rightarrow\infty)$. Since $B_n^{(m)}=\sum_{k=1}^n S^{(m)}(n,k)$, we consequently have

	$$S^{(m)}(n,k)=o(B_n^{(m)}),\quad k\geq 2\quad (m\rightarrow\infty),$$
	
\noindent and \eqref{eq:average_cardinality} follows immediately.

\section{A summary comment}
The set-theoretic intepretation of the higher order Bell numbers places them in a natural and fundamental context. Together with the higher order Stirling numbers, they take the same central position in the combinatorics of the higher order partition sets as their predecessors $B_n$ and $S(n,k)$ have at the ground level. 




\begin{thebibliography}{9}
\footnotesize
\bibitem{B38}
Bell, E.T., The Iterated Exponential Integers, \textit{Annals of mathematics,}{ \bf Vol. 39}, July 1938, 539 - 557.
\bibitem{B39}
Bell, E.T., Generalized Stirling transforms of sequences, \textit{Amer. Journal of Math,}{ \bf Vol. 61}, 1939, 89 - 101.
\bibitem{C74}
Comtet, L., Advanced Combinatorics, \textit{Reidel}, 1974.
\bibitem{S19}
Skau, I.H., Kristensen, K.F., An asymptotic Formula for the iterated exponential Bell Numbers, \url{http://arxiv.org/abs/1903.07979}, 2019.

\end{thebibliography}
\end{document}